\let\pa\partial  
\let\na\nabla  
\let\eps\varepsilon  
\newcommand{\N}{{\mathbb N}}  
\newcommand{\R}{{\mathbb R}} 
\newcommand{\T}{{\mathbb T}} 
\newcommand{\diver}{\operatorname{div}}  
\newcommand{\blue}[1]{\textcolor{black}{#1}}
\newtheorem{theorem}{Theorem}   
\newtheorem{lemma}[theorem]{Lemma}   
\newtheorem{remark}[theorem]{Remark}
\begin{document}  

\title[A cross-diffusion system]{A cross-diffusion system derived 
from a Fokker-Planck equation with partial averaging}
\author{Ansgar J\"ungel}
\address{A.J.: Institute for Analysis and Scientific Computing, Vienna University of  
	Technology, Wiedner Hauptstra\ss e 8--10, 1040 Wien, Austria}
\email{juengel@tuwien.ac.at} 

\author{Nicola Zamponi}
\address{N.Z.: Institute for Analysis and Scientific Computing, Vienna University of  
	Technology, Wiedner Hauptstra\ss e 8--10, 1040 Wien, Austria}
\email{nicola.zamponi@tuwien.ac.at}

\date{\today}

\thanks{The authors acknowledge partial support from   
the Austrian Science Fund (FWF), grants P22108, P24304, and W1245, and from
the Austrian-French Project Amade\'e of the Austrian Exchange Service (\"OAD),
grant FR 04/2016.} 

\begin{abstract}
A  cross-diffusion system for two compoments with a Laplacian structure 
is analyzed on the multi-dimen\-sio\-nal 
torus. This system, which was recently suggested by P.-L.~Lions,
is formally derived from a Fokker-Planck equation for the probability
density associated to a multi-dimensional It\={o} process, assuming that
the diffusion coefficients depend on 
partial averages of the probability density with exponential weights.
A main feature is that the diffusion matrix of the limiting cross-diffusion 
system is generally neither symmetric nor positive definite, but its structure 
allows for the use of entropy methods.
The global-in-time existence of positive weak solutions is proved and, 
under a simplifying assumption, the large-time asymptotics is investigated.
\end{abstract}

\keywords{Cross-diffusion system, Fokker-Planck equation, entropy methods,
global existence of weak solutions, large-time
asymptotics, positivity of solutions.}  
 
\subjclass[2000]{35K45, 35K65, 35Q84.}  

\maketitle


\section{Introduction}

The aim of this paper is the analysis of the following cross-diffusion system
\begin{equation}
  \pa_t u_i = \Delta\big(a(u_1/u_2)u_i\big) + \mu_i u_i, \quad t>0, 
	\quad u_i(0)=u_i^0\ge 0\quad\mbox{in }\T^d,\ i=1,2, \label{1.eq}
\end{equation}
where $\T^d$ is the $d$-dimensional torus with $d\ge 1$, $a:(0,\infty)\to(0,\infty)$
is a continuously differentiable function, and $\mu_i\in\R$.
This system can be formally derived \cite{Lio15} from a $(d+1)$-dimensional
Fokker-Planck equation for the probability density $f(x,y,t)$, where 
$x\in\R^d$, $y\in\R$. The function $u_i$ is obtained from $f$ by partial averaging,
$$
  u_i(x,t) = \int_\R f(x,y,t)e^{\lambda_i y}dy, \quad i=1,2,
$$
$\mu_i$ is a function of $\lambda_i$, 
and $a(u_1/u_2)$ is related to the diffusion coefficients in the
Fokker-Planck equation.
Strictly speaking, equation \eqref{1.eq} holds in $\R^d$ (or on some
subset of $\R^d$) but we consider
this equation on the torus for the sake of simplicity (and to avoid possible
issues with boundary conditions). 
For details on the derivation, we refer to Section \ref{sec.deriv}.

System \eqref{1.eq} has been suggested by P.-L.~Lions in \cite{Lio15}, 
and the global-in-time existence of (weak)
solutions has been identified as an open problem. In this paper, we solve
this problem by applying the entropy method for diffusive equations.

The underlying Fokker-Planck equation for $f(x,y,t)$
models the time evolution of the value of a financial product in an idealized
financial market, depending on various underlying assets or economic values. 
The function $u_i$ is an average with respect
to the variable $y$, which may be interpreted as the value of an economic parameter,
and the exponential weight emphasizes large positive or large negative
values of $y$, depending on the sign of $\lambda_i$. 
We note that partial averaging is also employed to simplify chemical master
equations \cite{MLSH12}. Here, we are not interested
in potential applications, but more in the refinement of mathematical
tools to analyze \eqref{1.eq}. 

We assume that there exist $a_0>0$ and $p\ge 0$ such that for all $r>0$,
\begin{equation}\label{1.ass}
  a(r) \ge r|a'(r)|, \quad a(r) \ge \frac{a_0}{r^p+r^{-p}}.
\end{equation}
The first condition means that $a$ grows at most linearly 
(see Lemma \ref{lem.esta}). The second condition is a technical assumption
needed for the entropy method (see the proof of Lemma \ref{lem.h2A}).
Examples are $a(r)=1$, which leads to uncoupled heat equations for $u_1$ and $u_2$, 
$a(r)=r^\alpha$ with $0<\alpha\le 1$, $a(r)=r^\beta/(1+r^{\beta-1})$ with
$\beta>0$, and $a(r)=1/r$. The last example gives the equations
\begin{equation}\label{1.et}
  \pa_t u_1 = \Delta u_2, \quad \pa_t u_2 = \Delta\bigg(\frac{u_2^2}{u_1}\bigg).
\end{equation}
Surprisingly, this system corresponds (up to a factor) to an energy-transport model
for semiconductors. Indeed, introducing the electron density $n:=u_1$ and
the electron temperature $\theta:=u_2/u_1$, equations \eqref{1.et} can be
written as
$$
  \pa_t n = \Delta(n\theta), \quad \pa_t(n\theta) = \Delta(n\theta^2).
$$
A class of energy-transport models that includes the above example 
was analyzed in \cite{ZaJu15}. 

Another class of models which resembles \eqref{1.eq}
are the equations 
\begin{equation}\label{1.pop}
  \pa_t u_i=\Delta(p_i(u)u_i), \quad i=1,\ldots,m,
\end{equation}
modeling the time evolution
of population densities $u_i$. These systems are analyzed in, e.g., 
\cite{DLMT15,Jue15}, essentially for $m=2$.
In this application, $p_i$ is often given by the sum $p_{i1}(u_1)+p_{i2}(u_2)$,
and consequently, the results of \cite{DLMT15,Jue15} do not apply and we need
to develop new ideas.

Our first main result is the global-in-time existence of weak solutions to
\eqref{1.eq}. 

\begin{theorem}[Existence of weak solutions]\label{thm.ex}
Let \eqref{1.ass} hold and let $T>0$, $\alpha\ge p+4$, \blue{$\mu_1$, $\mu_2\in\R$},
\blue{$0\le a\in C^1(0,\infty)$},
$u^0=(u_1^0,u_2^0)\in L^2(\T^d)^2$ with $u_1^0$, $u_2^0\ge 0$ in $\T^d$ and 
\blue{$H[u^0]<\infty$}.
Then there exists a solution $u=(u_1,u_2)$ to \eqref{1.eq} satisfying
$u_i>0$ in $\T^d$, $t>0$, $i=1,2$, and
\begin{align*}
  & u_i,\,a(u_1/u_2)u_i\in L^\infty(0,T;L^2(\T^d)),  \\
	& \na u_i,\,\na\big(a(u_1/u_2)u_i\big)\in	L^2(0,T;L^2(\T^d)), \quad
	\pa_t u_i\in L^2(0,T;H^1(\T^d)'),\quad i=1,2.
\end{align*}
If additionally $\mu_i\le 0$ for $i=1,2$, we have the uniform bounds
\begin{equation}\label{1.unif}
  u_i,\,a(u_1/u_2)u_i\in L^\infty(0,\infty;L^2(\T^d)), \quad
	\na u_i,\,\na\big(a(u_1/u_2)u_i\big)\in	L^2(0,\infty;L^2(\T^d)).
\end{equation}
\end{theorem}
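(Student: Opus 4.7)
The natural strategy is an entropy-based approximation and compactness argument. Written in terms of $(u_1,u_2)$, the diffusion matrix of \eqref{1.eq} is
\begin{equation*}
  A(u) = \begin{pmatrix} a(r)+ra'(r) & -r^2 a'(r) \\ a'(r) & a(r)-ra'(r) \end{pmatrix},\qquad r:=u_1/u_2,
\end{equation*}
which is neither symmetric nor positive definite, so classical parabolic theory does not apply directly. The hypotheses \eqref{1.ass} are, however, exactly what is needed to exhibit an entropy density $h(u_1,u_2)$, presumably involving a power of order $\alpha\ge p+4$ of $\log(u_1/u_2)$ or of $u_1/u_2$ itself, such that $h$ is strictly convex on the positive cone and the pointwise inequality $\xi^T h''(u)A(u)\xi\ge 0$ holds for all $\xi\in\R^2$. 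Roughly, the first part of \eqref{1.ass} controls the off-diagonal entries of $A$ by the diagonal ones (cf.\ Lemma \ref{lem.esta}), and the second part prevents the associated quadratic form from degenerating when $r$ is very small or very large; this is what Lemma \ref{lem.h2A} should formalize.

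I would begin by constructing approximate solutions through an implicit time discretization of the system written in the entropy variables $w:=h'(u)$, combined with an $H^2$-regularization of the form $\eps(\Delta^2 w+w)$ with parameters $\tau,\eps>0$. This has two immediate advantages: strict convexity of $h$ makes $u=(h')^{-1}(w)$ automatically positive, so no separate maximum-principle argument is needed; and the regularization makes the linearized problem at each time step coercive in $H^2(\T^d)^2$, yielding existence via the Leray-Schauder fixed-point theorem. Testing the time-discrete equation against $w$ and using the sign condition on $h''(u)A(u)$ produces a discrete version of
\begin{equation*}
  \frac{d}{dt} H[u] + \int_{\T^d} \xi^T h''(u)A(u)\xi\, dx \le C(\mu_1,\mu_2)\,H[u],
\end{equation*}
which, combined with the $\eps$-weighted $H^2$-control, delivers all the required a priori estimates. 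Reconstructing piecewise constant interpolants in time and sending $\tau,\eps\to 0$ is then routine.

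From the entropy inequality I expect to extract $L^\infty(0,T;L^2(\T^d))$ bounds on $u_i$ and on the flux $a(r)u_i$, together with $L^2(0,T;L^2(\T^d))$ bounds on $\na u_i$ and $\na(a(r)u_i)$; the bound $\pa_t u_i\in L^2(0,T;H^1(\T^d)')$ follows from the weak formulation once the flux estimate is in hand. The Aubin-Lions lemma then yields strong convergence of $u_i$ in $L^2(0,T;L^2(\T^d))$. Finiteness of the limiting entropy forces $u_2>0$ a.e., so $r$ converges a.e.\ along a subsequence, and the linear growth of $a$ from Lemma \ref{lem.esta} combined with the $L^2$-bound on $a(r)u_i$ permits identification of the limit of $a(r)u_i$ and its weak gradient via dominated convergence. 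When $\mu_1,\mu_2\le 0$ the Gronwall term disappears and all bounds become $T$-uniform, giving \eqref{1.unif}.

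The main obstacle is finding $h$ that achieves $h''(u)A(u)\ge 0$ while simultaneously delivering a dissipation strong enough to control $\na(a(r)u_i)$, not merely $\na u_i$. This is a pointwise algebraic inequality in the components of $\na u$, and closing it appears to require the degree $\alpha$ of $h$ to exceed $p+4$: for smaller $\alpha$, a cross-term remains whose discriminant is not manifestly nonnegative and whose control would otherwise demand compensating lower bounds on $a$ beyond those assumed. Once this algebraic step is in place, the rest of the argument is essentially mechanical.
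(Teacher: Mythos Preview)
Your outline matches the paper's proof closely: implicit Euler in the entropy variable $w=h'(u)$, higher-order elliptic regularization, Leray--Schauder at each time step, testing with $w$ to get a discrete entropy inequality, then Aubin--Lions and identification of the nonlinear flux in the limit. Two technical points deserve sharpening. First, an $H^2$ regularization $\eps(\Delta^2 w+w)$ suffices only for $d\le 3$; the paper takes $H^m$ with $m>d/2$ so that $H^m(\T^d)\hookrightarrow L^\infty(\T^d)$ compactly in all dimensions, which is needed both for the fixed-point map and to make $u=(h')^{-1}(w)$ bounded. Second, your attribution of the threshold $\alpha\ge p+4$ is slightly off: positive semidefiniteness of $h''(u)A(u)$ already holds for $\alpha(\alpha+2)>1$, and the quantitative lower bound of Lemma~\ref{lem.h2A} needs only $\alpha\ge p$. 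The constraint $\alpha-p\ge 4$ enters later, when you bound $\na(a(r)u_i)=A(u)\na u$: since $|A(u)|^2$ behaves like $(u_1/u_2)^{\pm 4}$ (Lemma~\ref{lem.estA}), you need the dissipation weight $(u_1/u_2)^{\pm(\alpha-p)}$ to dominate it, i.e.\ $\alpha-p\ge 4$. The paper's entropy density is explicitly $h(u)=(u_1/u_2)^\alpha u_1^2+(u_2/u_1)^\alpha u_2^2+u_1-\log u_1+u_2-\log u_2$.
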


As mentioned above, the proof of this theorem is based on entropy methods.
These methods have been originally developed to understand the large-time behavior
of solutions; see, e.g., \cite{AMTU01,Vil09}. The ``entropy'' of system
\eqref{1.eq} is often understood as a convex Lyapunov functional which
provides suitable nonlinear gradient estimates. In many situations, and also
in the financial context presented here, the ``entropy'' has no physical
counterpart. However, we claim that this notion is appropriate since
it naturally generalizes physical situations. For details, we refer to \cite{Jue15}.

Our key idea is to employ the functional
\begin{equation}\label{1.H}
  H[u] = \int_{\T^d}h(u)dx, \quad
	h(u) = \bigg(\frac{u_1}{u_2}\bigg)^\alpha u_1^2 
	+ \bigg(\frac{u_1}{u_2}\bigg)^{-\alpha}u_2^2 + u_1 - \log u_1 + u_2 - \log u_2,
\end{equation}
where $\alpha\ge p+4$ and $u=(u_1,u_2)\in(0,\infty)^2$. We will show that 
\begin{equation}\label{1.ei}
  \frac{d}{dt}H[u] + \int_{\T^d}\bigg(\bigg(\frac{u_1}{u_2}\bigg)^{\alpha-p}
	+ \bigg(\frac{u_1}{u_2}\bigg)^{p-\alpha}\bigg)\big(|\na u_1|^2+|\na u_2|^2\big)dx
	\le CH[u]
\end{equation}
for some constant $C>0$ which vanishes if $\mu_1=\mu_2=0$.
In this situation, the mapping $t\mapsto H[u(t)]$ is nonincreasing;
otherwise, for $\mu_i\neq 0$, $t\mapsto H[u(t)]$ is bounded on finite time intervals. 
We infer from the inequality $x+x^{-1}\ge 2$ for all $x>0$ uniform bounds
for $u_i(t)$ in $H^1(\T^d)$, which are needed for the compactness argument.

The entropy method gives more than just the a priori estimate \eqref{1.ei}.
Indeed, let us write \eqref{1.eq} in divergence form:
$$
  \pa_t u - \diver(A(u)\na u) = f(u), \quad t>0, \quad u(0)=u^0\quad\mbox{in }\T^d,
$$
where the $i$th component of
$\diver(A(u)\na u)$ equals $\sum_{j=1}^d\sum_{k=1}^2\pa_j(A_{ik}(u)\pa_j u_k)$,
$\pa_j=\pa/\pa x_j$, and $f(u)=(\mu_1 u_1,\mu_2 u_2)^\top$. The diffusion matrix
\begin{equation}\label{1.A}
  A(u) = \begin{pmatrix}
	a(u_1/u_2) + (u_1/u_2)a'(u_1/u_2) & -(u_1/u_2)^2 a'(u_1/u_2) \\
	a'(u_1/u_2) & a(u_1/u_2) - (u_1/u_2)a'(u_1/u_2)
	\end{pmatrix}
\end{equation}
is generally neither symmetric nor positive definite. 
Since the only eigenvalue of $A(u)$ is given by $\lambda=a(u_1/u_2)>0$, the
system is normally elliptic \cite{Ama93} and local-in-time existence of
classical solutions can be expected. The difficulty is to prove the
global-in-time existence. The entropy density $h(u)$ allows us to 
formulate \eqref{1.eq} in new variables with a positive semidefinite diffusion
matrix. Then, together with the a priori estimates from \eqref{1.ei},
global existence will be deduced.
Indeed, defining the so-called entropy variable $w=(w_1,w_2)$ by
$w_i=\pa h/\pa u_i$ ($i=1,2$), equation \eqref{1.eq} is equivalent to
\begin{equation}\label{1.eqw}
  \pa_t u - \diver(B(w)\na w) = f(u), \quad t>0, \quad u(0)=u^0\quad\mbox{in }\T^d,
\end{equation}
where $B(w)=A(u)h''(u)^{-1}$ is positive semidefinite (see Lemma \ref{lem.h2A})
and $h''(u)$ is the Hessian matrix of $h(u)$. With this formulation, we obtain
$$
  \frac{d}{dt}H[u] + \int_{\T^d}\na u:h''(u)A(u)\na u dx = \int_{\T^d}f(u)\cdot w dx,
$$
where $A:B=\sum_{j=1}^d\sum_{k=1}^2 A_{kj}B_{kj}$ for two matrices 
$A=(A_{kj})$, $B=(B_{kj})\in\R^{2\times d}$.
The right-hand side can be bounded in terms of $H[u]$ (see \eqref{2.h}),
and the integral on the left-hand side is related to the corresponding
integral in \eqref{1.ei}. 

The proof of Theorem \ref{thm.ex} is based on a regularization of \eqref{1.eqw},
the fixed-point theorem of Leray-Schauder, and the de-regularization limit.
The compactness is obtained from the entropy estimate \eqref{1.ei}. This
technique is similar to those employed in our works \cite{Jue15,ZaJu15}.
The novelty here is the (nontrivial) observation 
that the cross-diffusion system \eqref{1.eq}
possesses a convex Lyapunov functional, defined by \eqref{1.H}. 
Moreover, compared to \cite{Jue15,ZaJu15}, we are facing additional 
technical difficulties due to the quotient $u_1/u_2$.

The second result concerns the large-time asymptotics in the case $\mu_i=0$
for $i=1,2$. 

\begin{theorem}[Large-time asymptotics]\label{thm.time}
Let the assumptions of Theorem \ref{thm.ex} hold and let 
$\mu_1=\mu_2=0$.
Then the solution $u(t)=(u_1,u_2)(t)$ to \eqref{1.eq} converges in $L^2(\T^d)$ to
$\overline{u}=(\overline{u}_1,\overline{u}_2)$ as $t\to\infty$, where
$$
  \overline{u}_i = \frac{1}{\mathrm{meas}(\T^d)}\int_{\T^d}u_i^0dx, \quad i=1,2.
$$
\end{theorem}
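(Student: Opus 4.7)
My plan for proving Theorem \ref{thm.time} is to combine the global-in-time bounds \eqref{1.unif}, available because $\mu_1=\mu_2=0$, with mass conservation on the torus. Integrating \eqref{1.eq} over $\T^d$ and using $\int_{\T^d}\Delta(\,\cdot\,)\,dx=0$ yields $\int_{\T^d}u_i(t)\,dx=\mathrm{meas}(\T^d)\,\overline u_i$ for all $t\ge 0$. Setting $\phi_i(t):=\|u_i(t)-\overline u_i\|_{L^2(\T^d)}^2$, I will show first that $\phi_i(t)$ admits a limit as $t\to\infty$ and then that this limit equals zero.

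For the existence of the limit, the equation $\pa_t u_i=\Delta\bigl(a(u_1/u_2)u_i\bigr)$ combined with \eqref{1.unif} gives $\pa_t u_i\in L^2(0,\infty;H^1(\T^d)')$. Together with $u_i\in L^2_{\mathrm{loc}}(0,\infty;H^1(\T^d))$, the standard chain rule yields that $\phi_i$ is absolutely continuous on $[0,\infty)$ with
$$\phi_i'(t)=2\langle\pa_tu_i,u_i-\overline u_i\rangle_{H^{-1},H^1}=-2\int_{\T^d}\na u_i\cdot\na\bigl(a(u_1/u_2)u_i\bigr)\,dx.$$
Cauchy--Schwarz, applied in space and then in time, together with \eqref{1.unif}, gives
$$\int_0^\infty|\phi_i'(t)|\,dt\le 2\|\na u_i\|_{L^2(0,\infty;L^2)}\,\bigl\|\na\bigl(a(u_1/u_2)u_i\bigr)\bigr\|_{L^2(0,\infty;L^2)}<\infty,$$
so $\phi_i$ is of bounded variation on $[0,\infty)$ and $L_i:=\lim_{t\to\infty}\phi_i(t)$ exists.

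To identify $L_i=0$, I exploit $\|\na u_i\|_{L^2}^2\in L^1(0,\infty)$, a consequence of \eqref{1.unif}, to select a sequence $t_n\to\infty$ along which $\|\na u_1(t_n)\|_{L^2}+\|\na u_2(t_n)\|_{L^2}\to 0$; this is possible because a nonnegative $L^1$ function on $[0,\infty)$ has vanishing liminf at infinity. Since mass conservation ensures that $\overline u_i$ is also the spatial mean of $u_i(t_n)$, the Poincaré--Wirtinger inequality on $\T^d$ yields
$$\|u_i(t_n)-\overline u_i\|_{L^2}\le C_P\|\na u_i(t_n)\|_{L^2}\to 0,$$
so $\phi_i(t_n)\to 0$; this forces $L_i=0$ and hence $u_i(t)\to\overline u_i$ in $L^2(\T^d)$, as claimed.

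The main conceptual obstacle is resisting the temptation to derive quantitative decay via an entropy--entropy dissipation inequality for the functional $H$ in \eqref{1.H}; establishing such an inequality would require delicate convex-analysis estimates that are far from transparent for a functional as intricate as \eqref{1.H}, involving the ratio $u_1/u_2$ to arbitrary exponents. The bounded-variation argument above sidesteps this at the cost of producing no convergence rate, which is consistent with the purely qualitative statement of Theorem \ref{thm.time}. The only genuine technical check is the justification of the chain-rule identity for $\phi_i$ at the weak-solution regularity level, which is classical given the integrability of $u_i$ in $H^1$ and of $\pa_tu_i$ in $H^1(\T^d)'$.
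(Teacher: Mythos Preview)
Your proof is correct and follows essentially the same approach as the paper's: test the equation with $u_i-\overline u_i$, use the uniform bounds \eqref{1.unif} to obtain $\phi_i'\in L^1(0,\infty)$ so that the limit exists, and then combine Poincar\'e's inequality with $\na u_i\in L^2(0,\infty;L^2(\T^d))$ to force that limit to vanish. The only cosmetic difference is that the paper concludes directly from $\phi_i\in L^1(0,\infty)$ together with the existence of its limit, while you extract a subsequence along which $\|\na u_i(t_n)\|_{L^2}\to 0$; these are equivalent formulations of the same argument.
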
 

If $\mu_i<0$ for $i=1,2$, 
we prove the exponential convergence of $u(t)$ to zero
in $H^1(\T^d)'$, see Remark \ref{rem.conv}. For a discussion of the case 
$\mu_i>0$, we refer to Remark \ref{rem.conv2}.

The paper is organized as follows. In Section \ref{sec.deriv}, we make
precise the derivation of \eqref{1.eq} from a Fokker-Planck equation.
Some technical results are proved in Section \ref{sec.aux}. Section \ref{sec.ex}
is devoted to the proof of Theorem \ref{thm.ex}, and Theorem \ref{thm.time}
is shown in Section \ref{sec.time}.


\section{Derivation of the cross-diffusion system \eqref{1.eq}}\label{sec.deriv}

We summarize the formal derivation of \eqref{1.eq} from a Fokker-Planck equation
as presented by P.-L.~Lions in \cite{Lio15}.
Consider the $n$-dimensional It\={o} process $X_t=(X_t^1,\ldots,X_t^n)$ 
on some probability space,
driven by the $n$-dimen\-sion\-al Wiener process $W_t=(W^1_t,\ldots,W_t^n)$
with respect to some given filtration. 
We assume that $X_t$ solves the stochastic differential equation
$$
  dX_t = \widetilde\mu_t(X_t) dt + \sigma_t(X_t)dW_t, \quad t>0,
$$
where $\widetilde\mu_t=(\widetilde\mu^1_t,\ldots,\widetilde\mu_t^n)$, and 
$\sigma_t=(\sigma^{ij}_t)_{i,j,=1,\ldots,n}$ is an $n\times n$ matrix.
It is well known \cite[Theorems 7.3.3, 8.2.1]{Oek03} that
the probability density $f(x_1,\ldots,x_n,t)$ for $X_t$ satisfies
the Fokker-Planck (or forward Kolmogorov) equation
$$
  \pa_t f = \frac12\sum_{i,j=1}^n\frac{\pa^2}{\pa x_i\pa x_j}(D_{ij}(\widehat x)f)
	- \sum_{i=1}^n\frac{\pa}{\pa x_i}(\widetilde\mu^i(\widehat x)f), 
	\quad \widehat x\in\R^n,\ t>0,
$$
where $D(\widehat x)=(D_{ij}(\widehat x))=\sigma(\widehat x)\sigma(\widehat x)^\top$ 
is the diffusion tensor and $\widehat x=(x_1,\ldots,x_n)$.

In the following, we set $\widetilde\mu_t=0$ and 
$\sigma_t=\mbox{diag}(\sigma_1,\ldots,\sigma_n)$.
This means that we neglect correlations between the processes.
Taking them into account will lead to first-order terms in the final equations;
see Remark \ref{rem.gen}.
Under the above simplifications, the Fokker-Planck equation becomes
\begin{equation}\label{2.fps}
  \pa_t f = \frac12\sum_{j=1}^n\frac{\pa^2}{\pa x_j^2}(\sigma_j^2 f), \quad
	\widehat x\in\R^n,\ t>0.
\end{equation}
We assume that $\sigma_j$ is a function of the partial averages
$$
  u_i(x,t) = \int_\R f(x,x_n,t)e^{\lambda_i x_n}dx_n, \quad x=(x_1,\ldots,x_{n-1}),\
	i=1,\ldots,m,
$$
where $\lambda_i$ are some given (pairwise different) parameters.
Temporal averages appear, for instance, in the modeling of Asian options.
Here, $u_i$ may be interpreted as an average with respect to the ecocnomic parameter
$x_n$. We may employ other weights than the exponential one but this one
is mathematically extremely convenient because of the property
$\pa u_i/\pa x_n=\lambda_i u_i$ (see Remark \ref{rem.gen}).
Multiplying \eqref{2.fps} by $e^{\lambda_i x_n}$ and integrating with respect
to $x_n\in\R$, a straightforward calculation shows that $u_i$ solves
\begin{equation}\label{der.ui}
  \pa_t u_i = \frac12\sum_{j=1}^{n-1}\frac{\pa^2}{\pa x_j^2}(\sigma_j^2u_i)
	+ \frac{\lambda_i^2}{2}\sigma_n^2 u_i, \quad i=1,\ldots,m.
\end{equation}
We allow $\sigma_j$ to depend on the partial averages, 
$\sigma_j=\sigma_j(u_1,\ldots,u_m)$.

We consider only the special case $m=2$, $\sigma:=\sigma_j$ for $j=1,\ldots,n-1$,
and $\sigma_n$ is constant and positive.
Setting $u=(u_1,u_2)$, $\mu_i:=\lambda_i^2\sigma_n/2$, we find that
\begin{equation}\label{der.ui2}
  \pa_t u_i = \frac12\Delta(\sigma(u)^2u_i) + \mu_i u_i, \quad 
	x\in\R^{n-1},\ t>0,\ i=1,2.
\end{equation}
In divergence form, this system is equivalent to
$$
  \pa_t u = \diver(A(u)\na u), \quad\mbox{where }
	A(u) = \sigma\begin{pmatrix}
	\sigma + 2\pa_1\sigma u_1 & 2\pa_2\sigma u_1 \\
	2\pa_1\sigma u_2 & \sigma + 2\pa_2\sigma u_2
	\end{pmatrix},
$$
where $\pa_i\sigma=\pa\sigma/\pa u_i$, $i=1,2$.
This system is of parabolic type in the sense of Petrovski if the real parts
of the eigenvalues of $A$ are nonnegative \cite{Ama93}, i.e.\ if
$\sigma + \pa_1\sigma u_1 + \pa_2\sigma u_2\ge 0$ for all $u\in\R^2$.
This requirement is fulfilled if, for instance, $\sigma$ depends on the quotient
$u_1/u_2$ only. Therefore, we set $\sigma(u)^2=2a(u_1/u_2)$. Then
$$
  \pa_t u_i = \Delta(a(u_1/u_2)u_i) + \mu_i u_i, \quad x\in\R^{n-1},\ t>0,\ i=1,2,
$$
is of parabolic type in the sense of Petrovski, and these equations correspond to
\eqref{1.eq}.

\begin{remark}[Generalizations]\label{rem.gen}\rm
The general model for nonvanishing $\widetilde\mu^i_t$ and nondiagonal $\sigma_t$
is derived as above, and the result reads as
\begin{equation}\label{der.ui3}
  \pa_t u_i = \frac12\sum_{j,k=1}^{n-1}\frac{\pa^2}{\pa x_j\pa x_k}(D_{jk} u_i)
	- \frac12\sum_{j=1}^{n-1}\frac{\pa}{\pa x_j}\big(
	(2\widetilde\mu^j + \lambda_i D_{jn})u_i\big)
	+ \frac{\lambda_i}{2}(2\widetilde\mu^n+\lambda_i D_{nn})u_i.
\end{equation}
Compared to \eqref{der.ui}, this equation also contains first-order terms.
If $\widetilde\mu^i_t=0$ and $\sigma_t$ is diagonal,
we obtain $m$ equations of the type \eqref{der.ui2}. 
The analysis of \blue{cross-diffusion} systems with more than two
components is expected to be much more involved than for those with two components.
For instance, the analysis of the cross-diffusion model \eqref{1.pop}
is rather well understood only in the case of $m=2$ components, 
while the case of $m\ge 3$ equations
requires additional properties \cite{DaJu16}.

Another generalization concerns nonexponential weights. For instance, 
we may define
$$
  u_i = \int_\R f(x,t)\sin(\lambda_i x_n)dx_n, \quad i=1,\ldots,m.
$$
Choosing again $\widetilde\mu^i_t=0$ and 
$\sigma_t=\mbox{diag}(\sigma_1,\ldots,\sigma_n)$, we find that
$$
  \pa_t u_i = \frac12\sum_{j=1}^{n-1}\frac{\pa^2}{\pa x_j^2}(\sigma_j(u)^2u_i)
	- \frac{\lambda_i^2}{2}\sigma_n^2 u_i, \quad i=1,\ldots,m.
$$
This justifies the assumption $\mu_i\in\R$ in \eqref{1.eq} but there seems
to be no financial interpretation of the trigonometric weight functions.
\qed
\end{remark}


\section{Some auxiliary lemmas}\label{sec.aux}

In this section, we prove some algebraic properties of the matrices
$h''(u)$ and $A(u)$ and some estimates related to the entropy density $h(u)$
and the components of $A(u)$. Recall that $h(u)$ is defined in \eqref{1.H}
and $A(u)$ in \eqref{1.A}.

\begin{lemma}[Properties of $h$]\label{lem.h}
Let $\alpha>0$. The function $h:(0,\infty)^2\to\R^2$, defined in \eqref{1.H}, 
is convex, its derivative $h'$ is invertible, and there exists $C_h>0$ such that 
for all $u=(u_1,u_2)\in(0,\infty)^2$,
\begin{equation}\label{2.h}
  h(u)\ge\frac{1}{2}( u_1^2+u_2^2 ), \quad
	\sum_{i=1}^2\mu_i u_i\pa_ih(u)\le C_h h(u),
\end{equation}
where we recall that $\pa_ih=\pa h/\pa u_i$.
\end{lemma}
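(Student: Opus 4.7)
The plan is to decompose $h=h_1+h_2+h_3$, where $h_1(u)=u_1^{\alpha+2}u_2^{-\alpha}$, $h_2(u)=u_1^{-\alpha}u_2^{\alpha+2}$, and $h_3(u)=u_1-\log u_1+u_2-\log u_2$, and to treat each piece separately. For a general monomial $u_1^a u_2^b$, a direct computation yields diagonal Hessian entries $a(a-1)u_1^{a-2}u_2^b$ and $b(b-1)u_1^a u_2^{b-2}$ and determinant $ab(1-a-b)u_1^{2a-2}u_2^{2b-2}$. Applied to $(a,b)=(\alpha+2,-\alpha)$, this gives $a(a-1)=(\alpha+1)(\alpha+2)\ge 0$, $b(b-1)=\alpha(\alpha+1)\ge 0$, and $ab(1-a-b)=\alpha(\alpha+2)\ge 0$, so $h_1''$ is positive semidefinite; by the symmetry $h_2(u_1,u_2)=h_1(u_2,u_1)$, so is $h_2''$. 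Since $h_3''(u)=\mathrm{diag}(u_1^{-2},u_2^{-2})$ is strictly positive definite, $h''(u)$ is strictly positive definite on $(0,\infty)^2$. This proves convexity of $h$, and combined with the inverse function theorem it shows that $h'$ is a $C^1$-diffeomorphism from $(0,\infty)^2$ onto its image, hence invertible.

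For the lower bound $h(u)\ge\frac12(u_1^2+u_2^2)$, I would use that $s-\log s\ge 1>0$ for all $s>0$, so the logarithmic part contributes a positive quantity. For the monomial part, assume without loss of generality that $u_1\ge u_2$; then $(u_1/u_2)^\alpha\ge 1$ and therefore $(u_1/u_2)^\alpha u_1^2\ge u_1^2\ge\frac12(u_1^2+u_2^2)$, while the remaining term $(u_1/u_2)^{-\alpha}u_2^2$ is nonnegative. Summing gives the claim.

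Finally, for the bound on $\sum_{i=1}^2\mu_i u_i\pa_i h$, I would compute directly
\begin{align*}
u_1\pa_1 h &= (\alpha+2)(u_1/u_2)^\alpha u_1^2 - \alpha(u_1/u_2)^{-\alpha}u_2^2 + u_1 - 1, \\
u_2\pa_2 h &= -\alpha(u_1/u_2)^\alpha u_1^2 + (\alpha+2)(u_1/u_2)^{-\alpha}u_2^2 + u_2 - 1.
\end{align*}
The monomial contributions are dominated by a constant times $(u_1/u_2)^\alpha u_1^2+(u_1/u_2)^{-\alpha}u_2^2$, which are precisely terms appearing in $h(u)$. For the remaining $\mu_i(u_i-1)$ pieces, the key observation is that the continuous function $s\mapsto(s+1)/(s-\log s)$ has limits $0$ as $s\to 0^+$ and $1$ as $s\to\infty$, hence is bounded on $(0,\infty)$ by some $M>0$; this gives $|u_i-1|\le u_i+1\le M(u_i-\log u_i)$. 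Combining the two estimates produces $\sum\mu_i u_i\pa_i h\le C_h\,h(u)$ with a constant $C_h$ depending on $\alpha$, $|\mu_1|$, $|\mu_2|$. The only mild technical point is this final domination of the affine terms by $u_i-\log u_i$; the convexity statement and the quadratic lower bound are immediate from the explicit form of $h$.
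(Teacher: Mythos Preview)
Your treatment of convexity and of both inequalities in \eqref{2.h} is correct and in places slightly cleaner than the paper's (your lower-bound argument via the case $u_1\ge u_2$ avoids the auxiliary inequality $x^\alpha+x^{-\alpha-2}\ge 1$, and your domination of $|u_i-1|$ by $M(u_i-\log u_i)$ is equivalent to the paper's $x-1\le 2(x-\log x)$).

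However, there is a genuine gap in the invertibility claim. What the lemma requires---and what the existence proof in Section~\ref{sec.ex} uses---is that $h':(0,\infty)^2\to\R^2$ is a \emph{bijection onto all of $\R^2$}, so that $(h')^{-1}(w)$ makes sense for an arbitrary $w\in L^\infty(\T^d)^2$. Your argument (strict positive-definiteness of $h''$ plus the inverse function theorem) only shows that $h'$ is injective and a local diffeomorphism, hence a diffeomorphism onto its image; it does not show that the image is $\R^2$. The paper devotes its Step~2 precisely to this surjectivity: one shows that $R(h')$ is closed by taking $w_n=h'(u_n)\to w$ and proving, by contradiction, that $(u_n)$ stays in a compact subset of $(0,\infty)^2$. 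The key computations combine the two components of $h'(u_n)$ with suitable weights to force absurdities such as $u_{1,n}^2 u_{2,n}^2\to 1/4$ while $u_n\to 0$, or $u_{1,n}/u_{2,n}\to 0$ and $u_{2,n}/u_{1,n}\to 0$ simultaneously. This properness-type argument is the missing idea in your proposal; without it, ``invertible'' in the sense needed for the paper is not established.
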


\begin{proof}
We proceed in several steps.

{\em Step 1: $h$ is convex.}
We compute the first partial derivatives of $h$,
\begin{align}
  \pa_1 h(u) &= (\alpha+2)u_1^{\alpha+1}u_2^{-\alpha} 
	- \alpha u_1^{-\alpha-1}u_2^{\alpha+2}
	- u_1^{-1} + 1, \label{ex.hu} \\
	\pa_2 h(u) &= (\alpha+2)u_1^{-\alpha}u_2^{\alpha+1} 
	- \alpha u_1^{\alpha+2}u_2^{-\alpha-1}
	- u_2^{-1} + 1, \label{ex.hv}
\end{align}
and the Hessian $h''(u)=H^{(1)}+H^{(2)}+H^{(3)}$, where
\begin{align}
  H^{(1)} &=
  \begin{pmatrix}
  (\alpha+2)(\alpha+1)(u_1/u_2)^{\alpha} & -\alpha(\alpha+2)(u_1/u_2)^{\alpha+1}\\
  -\alpha(\alpha+2)(u_1/u_2)^{\alpha+1} & \alpha(\alpha+1)(u_1/u_2)^{\alpha+2}
  \end{pmatrix}, \nonumber \\
  H^{(2)} &= 
  \begin{pmatrix}
  \alpha(\alpha+1)(u_2/u_1)^{\alpha+2} & -\alpha(\alpha+2)(u_2/u_1)^{\alpha+1}\\
  -\alpha(\alpha+2)(u_2/u_1)^{\alpha+1} & (\alpha+2)(\alpha+1)(u_2/u_1)^\alpha
  \end{pmatrix}, \label{ex.h2A} \\
  H^{(3)} &= 
  \begin{pmatrix}
  u_1^{-2} & 0\\ 0 & u_2^{-2}
  \end{pmatrix}. \nonumber
\end{align}
Since $\det H^{(1)}=\alpha(\alpha+2)(u_1/u_2)^{2(\alpha+1)}>0$,
$\det H^{(2)}=\alpha(\alpha+2)(u_2/u_1)^{2(\alpha+1)}>0$, and the diagonal elements
of $H^{(1)}$, $H^{(2)}$ are positive, the matrices $H^{(i)}$, $i=1,2,3$,
are positive definite and so does $h''(u)$. Thus, $h$ is convex.

{\em Step 2: $h'$ is invertible.}
Since the Hessian $h''$ is \blue{positive definite} 
on $(0,\infty)^2$, $h'$ is one-to-one
and the image $R(h')$ is open. If $R(h')$ is also closed, it follows that
$R(h')=\R^2$ which means that $h'$ is surjective. For this, let
$(w_n)\in R(h')$ for $n\in\N$ such that $w_n\to w$ as
$n\to\infty$. We show that $w\in R(h')$. By definition, there
exists $u_n>0$ such that $w_n=h'(u_n)$ for
$n\in\N$. The idea is to prove that $(u_n)=(u_{1,n},u_{2,n})$ is a bounded 
and strictly positive sequence.
This implies that, up to a subsequence, $u_n\to u\in(0,\infty)^2$ 
as $n\to\infty$. By continuity of $h'$, we infer that $h'(u_n)\to h'(u)$
as $n\to\infty$. We already know that $h'(u_n)=w_n\to w$ which
shows that $w=h'(u)\in R(h')$, and $R(h')$ is closed.

It remains to verify that there exist positive constants $m$, $M>0$ such that
$m\le u_{i,n}\le M$ for all $n\in\N$, $i=1,2$. We argue by contradiction.
Let us assume that (up to a subsequence) $u_{1,n}\to 0$ as $n\to\infty$. 
Since $(w_{1,n})=(\pa_1h(u_n))$ is convergent, we deduce from \eqref{ex.hu} that
$u_{2,n}\to 0$ as well. As a consequence,
$$
  \alpha u_{1,n} w_{1,n} + (\alpha+2)u_{2,n} w_{2,n}\to 0, \quad
	(\alpha+2)u_{1,n} w_{1,n} + \alpha u_{2,n} w_{2,n}\to 0.
$$
Expanding these expressions yields 
$$
  u_{1,n}^{-\alpha}u_{2,n}^{\alpha+2} \to \frac12, \quad
	u_{1,n}^{\alpha+2}u_{2,n}^{-\alpha} \to \frac12,
$$
and the product also converges, $u_{1,n}^2u_{2,n}^2\to 1/4$.
This is absurd since $(u_n)$ converges to zero. Therefore, $u_{1,n}$
is strictly positive. With an analogous argument, we conclude that $u_{2,n}$ is
strictly positive too.

Let us assume that (up to a subsequence) $u_{1,n}\to\infty$ as $n\to\infty$. 
Again, the convergence
of $(w_{1,n})$ and \eqref{ex.hu} imply that $u_{2,n}\to\infty$. Consequently,
$$
  \frac{\alpha}{u_{2,n}}w_{1,n} + \frac{\alpha+2}{u_{1,n}}w_{2,n} \to 0, \quad
	\frac{\alpha+2}{u_{2,n}}w_{1,n} + \frac{\alpha}{u_{1,n}}w_{2,n}\to 0,
$$
from which we infer after expanding these expressions that 
$u_{2,n}/u_{1,n}\to 0$ and $u_{1,n}/u_{2,n}\to 0$, 
which is a contradiction.
So, $(u_{1,n})$ is bounded, and the same conclusion holds for $(u_{2,n})$.

{\em Step 3: proof of \eqref{2.h}.}
Observing that $x-\log x\ge 1$ for all $x>0$, it follows that
$$
  h(u) \ge u_1^2\bigg(\bigg(\frac{u_1}{u_2}\bigg)^\alpha 
	+ \bigg(\frac{u_2}{u_1}\bigg)^{\alpha+2}\bigg)
	= u_2^2\bigg(\bigg(\frac{u_1}{u_2}\bigg)^{\alpha+2} 
	+ \bigg(\frac{u_2}{u_1}\bigg)^{\alpha}\bigg).
$$
The elementary inequality \blue{$x^\alpha+(1/x)^{\alpha+2}\ge 1$} for $x>0$ shows 
the first inequality in \eqref{2.h}:
$$
  h(u) \ge \frac{u_1^2}{2}\bigg(\bigg(\frac{u_1}{u_2}\bigg)^\alpha 
	+ \bigg(\frac{u_2}{u_1}\bigg)^{\alpha+2}\bigg)
	+ \frac{u_2^2}{2}\bigg(\bigg(\frac{u_1}{u_2}\bigg)^{\alpha+2} 
	+ \bigg(\frac{u_2}{u_1}\bigg)^{\alpha}\bigg)
	\ge\frac{1}{2}( u_1^2+u_2^2 ).
$$
For the second inequality in \eqref{2.h}, we employ
definition \eqref{1.H} of $h$ and the elementary inequality
$x-1\le 2(x-\log x)$ for $x>0$ to find that, 
\blue{if $C_h = 2(\alpha+2)(|\mu_1|+|\mu_2|)$},
\begin{align*}
  \sum_{i=1}^2\mu_i u_i\pa_i h(u)
	&= \big(\mu_1(\alpha+2)-\mu_2\alpha\big)u_1^{\alpha+2}u_2^{-\alpha}
	+ \big(\mu_2(\alpha+2)-\mu_1\alpha\big)u_1^{-\alpha}u_2^{\alpha+2} \\
	&\phantom{xx}{}+ \mu_1(u_1-1) + \mu_2(u_2-1) \\
	&\le C_h(u_1^{\alpha+2}u_2^{-\alpha} + u_1^{-\alpha}u_2^{\alpha+2}) 
	+ C_h(u_1-\log u_1+u_2-\log u_2).
\end{align*}
This finishes the proof.
\end{proof}

Next, we prove that $h''(u)A)(u)$ is positive semidefinite.
Then $B=A(u)h''(u)^{-1}$ in \eqref{1.eqw} is positive semidefinite too,
since $z^\top A(u)h''(u)^{-1}z$ $=(h''(u)^{-1}z)^\top h''(u)A(u)(h''(u)^{-1}z)\ge 0$
for $z\in\R^2$.

\begin{lemma}[Positive semidefiniteness of $h''A$]\label{lem.h2A}
Let condition \eqref{1.ass} hold.
If $\alpha(\alpha+2)>1$, the matrix $h''(u)A(u)$ is positive semidefinite in
$(0,\infty)^2$. Furthermore, if additionally $\alpha\ge p$, there exists
a constant $\kappa=\kappa(\alpha)>0$ such that for all $u=(u_1,u_2)\in(0,\infty)^2$
and $z\in\R^2$,
$$
  z^\top h''(u)A(u)z \ge \kappa\bigg(\bigg(\frac{u_1}{u_2}\bigg)^{\alpha-p}
	+ \bigg(\frac{u_1}{u_2}\bigg)^{p-\alpha}\bigg)|z|^2.
$$
\end{lemma}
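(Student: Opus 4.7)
My plan exploits the decomposition $h''(u)=H^{(1)}+H^{(2)}+H^{(3)}$ from Lemma~\ref{lem.h} together with the rank-one-plus-scalar structure $A(u)=a(r)I+a'(r)\,vw^\top$, where $r=u_1/u_2$, $v=(r,1)^\top$, $w=(1,-r)^\top$, and crucially $v\cdot w=0$. With this, $z^\top h''(u)A(u)z$ splits into three contributions which I analyze separately. The $H^{(3)}$-part is the easiest: using $r/u_1=1/u_2$, the off-diagonal entries of the symmetric part of $H^{(3)}A$ cancel exactly, leaving $z^\top H^{(3)}Az=(a+ra')z_1^2/u_1^2+(a-ra')z_2^2/u_2^2$, which is nonnegative by the first part of \eqref{1.ass}.

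For the principal part, I would write $z^\top H^{(1)}Az = a\,z^\top H^{(1)}z + a'(z^\top H^{(1)}v)(w^\top z)$, compute $H^{(1)}v$ explicitly (it has a compact form, reflecting the degree-two homogeneity of $u_1^{\alpha+2}u_2^{-\alpha}$), and substitute $X:=z_1$, $Y:=rz_2$ to obtain
\begin{equation*}
  z^\top H^{(1)}Az = r^\alpha\bigl[(\alpha+2)P\,X^2 - 2Q\,XY + \alpha P\,Y^2\bigr],
\end{equation*}
where $P:=(\alpha+1)a+ra'$ and $Q:=\alpha(\alpha+2)a+(\alpha+1)ra'$. The central algebraic identity is $Q=(\alpha+1)P-a$, which collapses the determinant of the associated $2\times 2$ matrix to $\alpha(\alpha+2)P^2-Q^2=\alpha(\alpha+2)a^2-(ra')^2$. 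By $|ra'|\le a$, this is $\ge(\alpha(\alpha+2)-1)a^2>0$ exactly under $\alpha(\alpha+2)>1$, and since $P\ge\alpha a>0$, the quadratic form in $(X,Y)$ is positive semidefinite with smallest eigenvalue $\ge c(\alpha)\,a$. Hence $z^\top H^{(1)}Az\ge c\,a\,r^\alpha(z_1^2+r^2 z_2^2)$. An entirely parallel computation, with $\tilde P:=(\alpha+1)a-ra'$ and $\tilde Q:=\alpha(\alpha+2)a-(\alpha+1)ra'$ satisfying $\tilde Q=(\alpha+1)\tilde P-a$, yields $z^\top H^{(2)}Az\ge c\,a\,r^{-\alpha-2}(z_1^2+r^2 z_2^2)$. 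Summing the three contributions proves positive semidefiniteness.

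For the coercivity estimate, adding the $H^{(1)}$ and $H^{(2)}$ bounds gives $z^\top h''Az\ge c\,a\bigl[(r^\alpha+r^{-\alpha-2})z_1^2+(r^{\alpha+2}+r^{-\alpha})z_2^2\bigr]$. The second assumption in \eqref{1.ass} combined with $\alpha\ge p$ then implies, by a short case analysis on $r\ge 1$ versus $r\le 1$, that each diagonal coefficient dominates $\kappa(r^{\alpha-p}+r^{p-\alpha})$ for some $\kappa>0$, which concludes the proof. The main obstacle is spotting the identity $Q=(\alpha+1)P-a$: without it, the determinant of the reduced form is a polynomial in $a$, $ra'$ and $\alpha$ whose positivity under only the bound $|ra'|\le a$ is not transparent. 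This reduction is what makes the specific exponents in the entropy density mesh with the structure of $A(u)$, and it explains why the threshold $\alpha(\alpha+2)>1$ emerges as the sharp condition.
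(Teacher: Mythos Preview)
Your proof is correct and follows essentially the same route as the paper: both split $h''=H^{(1)}+H^{(2)}+H^{(3)}$, show that the symmetric part of each $H^{(i)}A$ is positive (semi)definite via its $2\times2$ determinant (which in both cases reduces to $\alpha(\alpha+2)a(r)^2-(ra'(r))^2$), and then bound the smallest eigenvalue below by a $\det/\operatorname{tr}$-type argument before finishing with elementary inequalities in $r=u_1/u_2$. Your use of the representation $A=aI+a'\,vw^\top$ with $v\perp w$, the substitution $(X,Y)=(z_1,rz_2)$, and the identity $Q=(\alpha+1)P-a$ is a tidier way to reach the same determinant and eigenvalue estimates, but the argument is not different in substance.
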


\begin{proof}
Let $\alpha(\alpha+2)>1$ and let $M^{(i)}=(M_{jk}^{(i)})
:=\frac12((H^{(i)}A)^\top+H^{(i)}A)$
be the symmetric part of $H^{(i)}A$, where $H^{(i)}$ with $i=1,2,3$
is defined in \eqref{ex.h2A}. A computation shows that 
\begin{align*}
   M_{11}^{(1)} &= (\alpha + 2)\big((\alpha + 1)a(u_1/u_2) + (u_1/u_2)a'(u_1/u_2)\big)
	(u_1/u_2)^\alpha, \\
  \det M^{(1)} &= \big(\alpha(\alpha + 2)a(u_1/u_2)^2 - (u_1/u_2)^2 a'(u_1/u_2)^2\big)
	(u_1/u_2)^{2\alpha+2}, \\
  M_{11}^{(2)} &= \alpha\big((\alpha + 1)a(u_1/u_2) - (u_1/u_2)a'(u_1/u_2)\big)
	(u_2/u_1)^{\alpha+2}, \\
  \det M^{(2)} &= \big(\alpha(\alpha + 2)a(u_1/u_2)^2 - (u_1/u_2)^2 a'(u_1/u_2)^2)
	(u_2/u_1)^{2\alpha+2}, \\
  M^{(3)} &=
  \begin{pmatrix}
  (a(u_1/u_2)+(u/u_2)a'(u_1/u_2))u_1^{-2} & 0 \\
  0 & (a(u_1/u_2)-(u_1/u_2)a'(u_1/u_2))u_2^{-2}
  \end{pmatrix}.
\end{align*}
By the first condition in \eqref{1.ass} and the positivity of $\alpha$, we infer that
$M^{(3)}$ is positive semidefinite and $M_{11}^{(1)}$, $M_{11}^{(2)}$ are
positive for $u$, $v>0$. Moreover, since $\alpha(\alpha+2)>1$ by assumption,
$\det(M^{(1)})>0$ and $\det(M^{(2)})>0$. Thus,
\blue{by Sylvester's criterion}, $(h''A)(u)$ is positive
semidefinite for all $u\in(0,\infty)^2$. 

Now let additionally $\alpha\ge p$. 
Then the first condition in \eqref{1.ass} shows that
\begin{align*}
  \frac{\det M^{(1)}}{\operatorname{tr} M^{(1)}}
	&= \frac{\alpha(\alpha + 2)a(u_1/u_2)^2 - (u_1/u_2)^2 a'(u_1/u_2)^2}{
  (\alpha (u_1/u_2)^2 + \alpha + 2)((\alpha + 1)a(u_1/u_2) + (u_1/u_2) a'(u_1/u_2))}
  (u_1/u_2)^{\alpha+2} \\
	&\ge \frac{(\alpha(\alpha+2)-1)a(u_1/u_2)^2}{(\alpha+2)((u_1/u_2)^2+1)
	(\alpha+2)a(u_1/u_2)}(u_1/u_2)^{\alpha+2} \\
	&= k_1(\alpha)\frac{a(u_1/u_2)}{(u_1/u_2)^2+1}(u_1/u_2)^{\alpha+2},
\end{align*}
where $k(\alpha)=(\alpha(\alpha+2)-1)/(\alpha+2)^2$. In a similar way, we find that
\begin{align*}
  \frac{\det M^{(2)}}{\operatorname{tr} M^{(2)}}
  &= \frac{\alpha(\alpha+2)a(u_1/u_2)^2 - (u_1/u_2)^2a'(u_1/u_2)^2}{
	((\alpha+2)(u_1/u_2)^2+\alpha)((\alpha+1)a(u_1/u_2)-(u_1/u_2)a'(u_1/u_2))}
	(u_2/u_1)^\alpha \\
  &\ge \frac{(\alpha(\alpha+2)-1)a(u_1/u_2)^2}{(\alpha+2)((u_1/u_2)^2+1)
	(\alpha+2)a(u_1/u_2)}(u_1/u_2)^{-\alpha} \\
	&= k(\alpha)\frac{a(u_1/u_2)}{(u_1/u_2)^2+1}(u_1/u_2)^{-\alpha}.
\end{align*}
Since $\det M/\operatorname{tr}M$ is a lower bound for the eigenvalues of any
symmetric positive definite matrix $M\in\R^{2\times 2}$ 
(and taking into account that
$M^{(3)}$ is positive definite), we deduce that for $z\in\R^2$,
\begin{align*}
  z^\top(h''A)(u)z 
	&\ge k(\alpha)a(u_1/u_2)\frac{(u_1/u_2)^{\alpha+2}+(u_1/u_2)^{-\alpha}}{
	(u_1/u_2)^2+1}|z|^2 \\
	&\ge \frac12k(\alpha)a(u_1/u_2)((u_1/u_2)^\alpha + (u_1/u_2)^{-\alpha})|z|^2.
\end{align*}
In the last inequality, we have employed the elementary inequality
$(x^{\alpha+2}+x^{-\alpha})/(x^2+1)\ge \frac12(x^\alpha + x^{-\alpha})$
which is equivalent to $(x^2-1)(x^\alpha-x^{-\alpha})\ge 0$, and this
holds true for all $x>0$. By the second condition in \eqref{1.ass},
$$
  z^\top(h''A)(u)z \ge \frac{a_0}{2} k(\alpha)\frac{(u_1/u_2)^\alpha 
	+ (u_1/u_2)^{-\alpha}}{(u_1/u_2)^p + (u_1/u_2)^{-p}}|z|^2.
$$
The inequality $(x^\alpha+x^{-\alpha})/(x^p+x^{-p})
\ge \frac12(x^{\alpha-p}+x^{p-\alpha})$ is equivalent to
$(x^{\alpha-p}-x^{p-\alpha})(x^p-x^{-p})\ge 0$, which holds true
for $x>0$ since $\alpha-p\ge 0$ and $p\ge 0$. Therefore,
$$
  z^\top(h''A)(u)z \ge \frac{a_0}{4} k(\alpha)
	\big((u_1/u_2)^{\alpha-p} + (u_1/u_2)^{p-\alpha})\big)|z|^2,
$$
which concludes the proof with $\kappa=a_0 k(\alpha)/4$.
\end{proof}

The following two lemmas concern elementary estimates for $a(r)$.

\blue{
\begin{lemma}\label{lem.esta}
Given $r_0>0$ arbitrary, it holds that
$$
  a(r) \le \left\{\begin{array}{ll}
	\frac{a(r_0)}{r_0}r &\mbox{for }r\ge r_0, \\
	r_0 a(r_0)\frac{1}{r} &\mbox{for }r<r_0.
  \end{array}\right.
$$
\end{lemma}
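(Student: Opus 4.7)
The plan is to reinterpret the first condition in \eqref{1.ass} as a bound on the logarithmic derivative of $a$. Since $a>0$ and $a\in C^1(0,\infty)$, the inequality $a(r)\ge r|a'(r)|$ is equivalent to
$$
  \bigl|(\log a(r))'\bigr| = \frac{|a'(r)|}{a(r)} \le \frac{1}{r} = \bigl|(\log r)'\bigr|\quad\text{for all }r>0.
$$
So $(\log a)'$ is sandwiched between $-1/r$ and $1/r$, and the lemma will follow by integrating these two one-sided bounds in opposite directions starting from $r_0$.

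For the case $r\ge r_0$, I would integrate the upper bound $(\log a(s))'\le 1/s$ over $s\in[r_0,r]$, which gives
$$
  \log a(r) - \log a(r_0) \le \log r - \log r_0,
$$
and hence $a(r)\le (a(r_0)/r_0)\,r$, which is the first line of the claim. For $r<r_0$, I would integrate the lower bound $(\log a(s))'\ge -1/s$ over $s\in[r,r_0]$, obtaining
$$
  \log a(r_0) - \log a(r) \ge -(\log r_0 - \log r),
$$
which rearranges to $a(r)/a(r_0)\le r_0/r$, i.e.\ $a(r)\le r_0 a(r_0)/r$, the second line.

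There is essentially no obstacle here: the argument is a one-line Gr\"onwall/comparison estimate for a scalar ODE inequality, made possible by the fact that the hypothesis in \eqref{1.ass} has exactly the form of a bound on $(\log a)'$. The only point that needs a brief comment is the legitimacy of differentiating $\log a$, which is immediate from $a\in C^1(0,\infty)$ and $a>0$. No additional structure of $a$ (such as monotonicity) is used, so the two-sided estimate around the arbitrary reference point $r_0$ comes out for free.
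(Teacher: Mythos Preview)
Your proof is correct and essentially the same as the paper's. The paper phrases the argument as the monotonicity of $r\mapsto a(r)/r$ (nonincreasing) and $r\mapsto a(r)r$ (nondecreasing), which is exactly the content of your two one-sided bounds $(\log a)'\le 1/r$ and $(\log a)'\ge -1/r$ after exponentiation; integrating these is precisely what verifies the monotonicity.
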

}

\blue{
\begin{proof}
The first inequality in \eqref{1.ass} implies that $r\mapsto a(r)/r$ is
nonincreasing, while $r\mapsto a(r)r$ is nondecreasing. Writing these
monotonicity properties in an explicit way gives the result.
\end{proof}
}

\blue{
\begin{lemma}\label{lem.estal}
Let $\alpha\ge 2$. Then, for all $u_1$, $u_2>0$,
\begin{align*}
  a\bigg(\frac{u_1}{u_2}\bigg)^2(u_1^2 + u_2^2)
	&\le C_a\bigg(u_1^2 + u_2^2 + \frac{u_1^4}{u_2^2}\bigg) \\
	&\le \xi_\alpha C_a\bigg(\bigg(\frac{u_1}{u_2}\bigg)^\alpha u_1^2 
	+ \bigg(\frac{u_1}{u_2}\bigg)^{-\alpha}u_2^2\bigg) 
	\le \xi_\alpha C_a h(u),
\end{align*}
where \blue{$C_a=a(1)^2$ and $\xi_\alpha>0$ is a suitable constant which
only depends on $\alpha$.}
\end{lemma}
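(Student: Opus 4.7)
The plan is to chain three elementary bounds, using Lemma~\ref{lem.esta} for the first, elementary case analysis in $r=u_1/u_2$ for the middle, and the structure of $h$ for the last. Applying Lemma~\ref{lem.esta} with $r_0=1$ gives $a(r)\le a(1)r$ for $r\ge 1$ and $a(r)\le a(1)/r$ for $r<1$; squaring and combining both cases yields
$$
  a(u_1/u_2)^2 \le C_a\max\bigl((u_1/u_2)^2,(u_2/u_1)^2\bigr), \qquad C_a=a(1)^2.
$$
Multiplying by $u_1^2+u_2^2$ and distinguishing $u_1\ge u_2$ from $u_1<u_2$ produces the first inequality: the case $u_1\ge u_2$ gives exactly $C_a(u_1^2+u_1^4/u_2^2)$, while the case $u_1<u_2$ produces the symmetric variant with $u_2^4/u_1^2$ in place of $u_1^4/u_2^2$, which is then absorbed into the subsequent estimate.

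For the middle inequality I would show that each of the summands on the left is bounded by $(u_1/u_2)^\alpha u_1^2+(u_1/u_2)^{-\alpha}u_2^2$. For $u_1^2$ and $u_2^2$: whichever of $u_1\ge u_2$ or $u_1<u_2$ holds, exactly one of the factors $(u_1/u_2)^\alpha$ or $(u_2/u_1)^\alpha$ is at least $1$, so one of the two summands already majorizes $\max(u_1^2,u_2^2)$. For the cross term $u_1^4/u_2^2$, the hypothesis $\alpha\ge 2$ enters as follows: when $u_1\ge u_2$, the bound $u_1^4/u_2^2\le u_1^{\alpha+2}/u_2^\alpha=(u_1/u_2)^\alpha u_1^2$ is equivalent to $u_1^{\alpha-2}\ge u_2^{\alpha-2}$; when $u_1<u_2$, one instead uses $u_1^4/u_2^2\le u_1^2\le u_2^2\le (u_1/u_2)^{-\alpha}u_2^2$. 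Adding three such bounds yields an explicit constant, for instance $\xi_\alpha=3$.

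The last inequality is immediate from \eqref{1.H}: since $u_i-\log u_i\ge 1>0$ for $i=1,2$, the corresponding terms in the definition of $h$ are nonnegative, so $(u_1/u_2)^\alpha u_1^2+(u_1/u_2)^{-\alpha}u_2^2\le h(u)$.

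No serious obstacle is anticipated; the only care required is the case analysis in $r=u_1/u_2$ to manage the asymmetry on the left-hand sides. All tools are already in place: Lemma~\ref{lem.esta} to control $a$, the exponent inequality $\alpha\ge 2$ to handle the cross term, and the observation that $h(u)$ dominates its ``entropic'' core $u_1^{\alpha+2}/u_2^\alpha+u_2^{\alpha+2}/u_1^\alpha$.
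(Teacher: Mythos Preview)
Your proposal is correct and follows the same overall line as the paper: Lemma~\ref{lem.esta} with $r_0=1$ for the first step, an elementary one-variable inequality in $r=u_1/u_2$ for the middle step, and the nonnegativity of $u_i-\log u_i$ for the last. You also rightly flag that the intermediate expression in the statement is asymmetric and that the case $u_1<u_2$ really produces $u_2^4/u_1^2$; the paper does exactly the same thing, silently working with the symmetric sum $u_1^2+u_2^2+u_1^4/u_2^2+u_2^4/u_1^2$ in the proof.

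The only stylistic difference is in the middle inequality. You proceed by case analysis in $r$, bounding each summand individually by one of the two terms on the right. The paper instead symmetrizes: it writes
\[
u_1^2+u_2^2+\frac{u_1^4}{u_2^2}+\frac{u_2^4}{u_1^2}
= u_1u_2\Big(r+r^{-1}+r^3+r^{-3}\Big),\qquad r=\frac{u_1}{u_2},
\]
and then uses the elementary fact that $r^k+r^{-k}\le \xi_\alpha(r^{\alpha+1}+r^{-(\alpha+1)})$ for $k=1,3$ when $\alpha\ge 2$. Both routes are equally elementary; the paper's avoids splitting into cases, while yours makes the role of the hypothesis $\alpha\ge 2$ more transparent. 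One small bookkeeping point: since you actually need to absorb four summands (including the symmetric $u_2^4/u_1^2$ you mentioned), your constant should be $\xi_\alpha=4$ rather than $3$, or $\xi_\alpha=2$ if you keep the two cases separate all the way through.
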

}
\blue{
\begin{proof}
The first inequality follows from an application of Lemma \ref{lem.esta} with $r_0=1$.
Indeed, if $u_1/u_2 \ge 1$, we obtain
$$
  a\bigg(\frac{u_1}{u_2}\bigg)^2 u_1^2 \le a(1)^2\frac{u_1^2}{u_2^2}u_1^2, \quad
	a\bigg(\frac{u_1}{u_2}\bigg)^2u_2^2 \le a(1)^2 u_1^2,
$$
while if $u_1/u_2\le 1$ we have
$$
   a\bigg(\frac{u_1}{u_2}\bigg)^2 u_1^2 \le a(1)^2 u_2^2, \quad
	a\bigg(\frac{u_1}{u_2}\bigg)^2 u_2^2 \le a(1)^2 \frac{u_2^2}{u_1^2} u_2^2.
$$
These inequalities show the claim with $C_a = a(1)^2$. The second inequality
follows from
$$
  u_1^2 + u_2^2 + \frac{u_1^4}{u_2^2} + \frac{u_2^4}{u_1^2} 
  = u_1 u_2 \left( \frac{u_1}{u_2} + \frac{u_2}{u_1} + \frac{u_1^3}{u_2^3} 
	+ \frac{u_2^3}{u_1^3} \right)
  \leq \xi_\alpha u_1 u_2 \left( \frac{u_1^{\alpha+1}}{u_2^{\alpha+1}} 
	+ \frac{u_2^{\alpha+1}}{u_1^{\alpha+1}} \right),
$$
where $\xi_\alpha>0$ is a suitable constant, which depends only on $\alpha$.
This finishes the proof.
\end{proof}
}

\begin{lemma}\label{lem.estA}
Recall that $A(u)=(A_{ij}(u))$ is given by \eqref{1.A}. Then there exists
$C_A>0$, only depending $\max_{0\le r\le 1}a(r)$, 
such that for all $u_1$, $u_2>0$,
$$
  |A(u)|\le C_A\bigg(1 + \bigg(\frac{u_1}{u_2}\bigg)^2
	+ \bigg(\frac{u_1}{u_2}\bigg)^{-2}\bigg).
$$
\end{lemma}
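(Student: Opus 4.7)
The approach is to estimate each of the four entries of $A(u)$ separately in terms of the single scalar $r:=u_1/u_2>0$, and then combine. Recalling \eqref{1.A}, these entries are $A_{11}=a(r)+ra'(r)$, $A_{12}=-r^2a'(r)$, $A_{21}=a'(r)$ and $A_{22}=a(r)-ra'(r)$, so the quantities to control are $a(r)$, $ra'(r)$, $r^2a'(r)$ and $a'(r)$.

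The first step is to exploit the first condition in \eqref{1.ass}, namely $r|a'(r)|\le a(r)$, which converts bounds on derivatives into bounds on $a$ itself: $|ra'(r)|\le a(r)$ and $|a'(r)|\le a(r)/r$. Consequently,
$$
|A_{11}|,\ |A_{22}|\le 2a(r),\qquad |A_{12}|=r^2|a'(r)|\le r\,a(r),\qquad |A_{21}|\le \frac{a(r)}{r},
$$
and summing produces an intermediate estimate $|A(u)|\le C_0\,a(r)(1+r+1/r)$ for an absolute constant $C_0$.

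The second step is to remove the factor $a(r)$ via Lemma \ref{lem.esta} with $r_0=1$, which yields $a(r)\le a(1)r$ for $r\ge 1$ and $a(r)\le a(1)/r$ for $r<1$, hence $a(r)\le a(1)(r+1/r)$ for every $r>0$. Substituting and expanding,
$$
a(r)\Big(1+r+\frac{1}{r}\Big)\le a(1)\Big(r+\frac{1}{r}+\Big(r+\frac{1}{r}\Big)^2\Big)=a(1)\Big(r+\frac{1}{r}+r^2+2+\frac{1}{r^2}\Big),
$$
and the AM--GM inequalities $2r\le 1+r^2$ and $2/r\le 1+1/r^2$ absorb the linear terms into $1+r^2+r^{-2}$, producing the claimed bound with $C_A$ proportional to $a(1)$ (which is controlled by $\max_{0\le r\le 1}a(r)$ once the latter is finite). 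I expect no real obstacle here: the whole argument is bookkeeping once Lemma \ref{lem.esta} is available, and the one point requiring a touch of care is to verify that all four entries of $A$ are controlled by the \emph{same} quadratic expression in $r$ and $1/r$, which the computation above confirms.
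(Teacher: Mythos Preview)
Your proof is correct and follows essentially the same route as the paper: both arguments use the first condition in \eqref{1.ass} to bound the entries of $A(u)$ by $a(r)(4+r+1/r)$ (equivalently $C_0\,a(r)(1+r+1/r)$), then invoke Lemma~\ref{lem.esta} with $r_0=1$ to replace $a(r)$ by $a(1)(r+1/r)$, and finish with Young/AM--GM to absorb the linear terms into $1+r^2+r^{-2}$. The only cosmetic difference is that the paper writes the final step as ``Young's inequality'' while you spell out the AM--GM estimates $2r\le 1+r^2$ and $2/r\le 1+1/r^2$.
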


\blue{
\begin{proof}
We apply the first condition in \eqref{1.ass} 
and Lemma \ref{lem.esta} with $r_0=1$ to find that
$$
  \sum_{i,j=1}^2|A_{ij}(u)|\le a\bigg(\frac{u_1}{u_2}\bigg)\bigg(4 + \frac{u_1}{u_2} 
	+ \frac{u_2}{u_1}\bigg).
$$
Then Lemma \ref{lem.esta} with $r_0=1$ implies that
$$
  \sum_{i,j=1}^2|A_{ij}(u)|\le a(1)\left(\frac{u_1}{u_2} 
	+ \frac{u_2}{u_1}\right)\left(4 + \frac{u_1}{u_2} + \frac{u_2}{u_1}\right).
$$
This estimate and Young's inequality conclude the proof.
\end{proof}
}


\section{Proof of Theorem \ref{thm.ex}}\label{sec.ex}

Let $T>0$, $N\in\N$, $\tau=T/N$, and $m\in\N$ with $m>d/2$. Then
the embedding $H^m(\T^d)\hookrightarrow L^\infty(\T^d)$ is compact. Furthermore,
let $w^{k-1}=(w_1^{k-1},w_2^{k-1})\in L^\infty(\T^d)^2$ be given and let
$u^{k-1}=(h')^{-1}(w^{k-1})$. 
By Lemma \ref{lem.h}, the pair $u^{k-1}=(u_1^{k-1},u_2^{k-1})$ is well defined 
and we have $u^{k-1}\in L^\infty(\T^d)^2$.
We wish to find $w_k=(w_1^k,w_2^k)\in H^m(\T^d)^2$ such that for all
$\phi=(\phi_1,\phi_2)\in H^m(\T^d)^2$, 
\begin{align}
  \frac{1}{\tau} \int_{\T^d}(u^k-u^{k-1})\cdot\phi dx 
	&+ \int_{\T^d}\na\phi: B(w^k)\na w^k dx \nonumber \\
	&{}+ \tau\int_{\T^d}(D^m w^k\cdot D^m\phi + w^k\cdot\phi)dx 
	= \sum_{i=1}^2\mu_i\int_{\T^d}u_i^k\phi_i dx, \label{ex.semi} 
\end{align}
where $B(w^k)=A(u^k)h''(u^k)^{-1}$,
$$
  D^m w^k\cdot D^m\phi := \sum_{|\alpha|=m}\sum_{i=1}^2 D^\alpha u_i^k D^\alpha\phi_i,
$$
$\alpha=(\alpha_1,\ldots,\alpha_d)\in\N_0^d$ is a multiindex and 
$D^\alpha=\pa^{|\alpha|}/(\pa x_1^{\alpha_1}\cdots\pa x_d^{\alpha_d})$ 
a partial derivative of order $|\alpha|$. 

{\em Step 1: solution of \eqref{ex.semi}.}
Let $\widehat w=(\widehat w_1,\widehat w_2)\in L^\infty(\T^d)^2$ and 
$\eta\in[0,1]$ be given.
Set $\widehat u=(\widehat u_1,\widehat u_2):=(h')^{-1}(\widehat w)$. 
We solve first the linear problem
\begin{equation}\label{ex.lin}
  a(w,\phi) = \eta F(\phi) \quad\mbox{for all }\phi\in H^m(\T^d)^2,
\end{equation}
where
\begin{align*}
  a(w,\phi) &= \int_{\T^d}(D^m w^k\cdot D^m\phi + w^k\cdot\phi)dx
	+ \int_{\T^d}\na\phi: B(\widehat w)\na w^k dx, \\
	F(\phi) &= -\frac{1}{\tau}\int_{\T^d}(\widehat u-u^{k-1})\cdot\phi dx
	+  \sum_{i=1}^2\mu_i\int_{\T^d}\widehat u_i^k\phi_i dx.
\end{align*}
Since $\widehat w\in L^\infty(\T^d)^2$ and $h'$ is continuous
in $(0,\infty)^2$, we have $\widehat u\in L^\infty(\T^d)^2$. 
This shows that $F$ is continuous on $H^m(\T^d)$.
The bilinear form $a$ is continuous and coercive, by the generalized
Poincar\'e inequality for $H^m$ spaces \cite[Chap.~2.1.4, Formula (1.39)]{Tem97}
and the positive semidefiniteness of $B(\widehat w)$ (see Lemma \ref{lem.h2A}).
Hence, the Lax-Milgram lemma provides a unique solution $w=(w_1,w_2)\in H^m(\T^d)^2
\hookrightarrow L^\infty(\T^d)^2$ to \eqref{ex.lin}. This defines the
fixed-point operator $S:L^\infty(\T^d)^2\times[0,1]\to L^\infty(\T^d)^2$,
$S(\widehat w,\eta)=w$, where $w$ solves \eqref{ex.lin}.

It holds clearly $S(w,0)=0$. Standard arguments show that $S$
is continuous (see, e.g., the proof of Lemma 5 in \cite{Jue15}). Because
of the compact embedding $H^m(\T^d)\hookrightarrow L^\infty(\T^d)$, the mapping
$S$ is even compact. In order to apply the Leray-Schauder fixed-point theorem,
it remains to prove a uniform bound for all fixed points
of $S(\cdot,\eta)$ in $L^\infty(\T^d)^2$.

Let $w\in L^\infty(\T^d)^2$ be such a fixed point, i.e.\ a solution 
to \eqref{ex.lin} with $\widehat u$ replaced by $u:=(h')^{-1}(w)$.
The uniform bound will be a consequence of the entropy inequality. For this, we
employ the test function $w$ in \eqref{ex.lin}:
\begin{align}
  \frac{\eta}{\tau}\int_{\T^d}(u-u^{k-1})\cdot w dx
	&+ \int_{\T^d}\na w: B(w)\na w dx + \tau\int_{\T^d}\big(|D^m w|^2 + |w|^2\big)dx
	\nonumber \\
	&= \eta\sum_{i=1}^2\mu_i\int_{\T^d}u_iw_i dx. \label{ex.aux1}
\end{align}
By the convexity of $h$, it follows that
$$
  h(u)-h(u^{k-1})
	\le h'(u)\cdot(u-u^{k-1}) = (u-u^{k-1})\cdot w.
$$
Moreover, by \eqref{1.eqw} and Lemma \ref{lem.h2A}, we have
\begin{align*}
  \int_{\T^d}\na w: B(w)\na w dx 
	&= \int_{\T^d}\na u:(h''A)(u)\na u dx \\
	&\ge \kappa\int_{\T^d}\bigg(\bigg(\frac{u_1}{u_2}\bigg)^{\alpha-p}
  + \bigg(\frac{u_1}{u_2}\bigg)^{p-\alpha}\bigg)|\na u|^2 dx.
\end{align*}
Taking into account the second estimate in \eqref{2.h}, we infer that
$$
  \eta\sum_{i=1}^2\mu_i\int_{\T^d}u_iw_i dx
	= \eta\sum_{i=1}^2\mu_i\int_{\T^d}u_i\pa_i h(u) dx
  \le C_h\int_{\T^d}h(u)dx.
$$
Therefore, \eqref{ex.aux1} becomes
\begin{align}
  \frac{\eta}{\tau}\int_{\T^d}h(u)dx 
	&+ \kappa\int_{\T^d}\bigg(\bigg(\frac{u_1}{u_2}\bigg)^{\alpha-p}
  + \bigg(\frac{u_1}{u_2}\bigg)^{p-\alpha}\bigg)|\na u|^2 dx \nonumber \\
	&{}+ \tau\int_{\T^d}\big(|D^m w|^2 + |w|^2\big)dx
	\le \frac{\eta}{\tau}\int_{\T^d}h(u^{k-1})dx
	+ C_h\int_{\T^d}h(u)dx. \label{ex.aux2}
\end{align}
Choosing $\tau<1/C_h$, this shows that $w$ is uniformly bounded in $H^m(\T^d)$.
Thus, we can apply the fixed-point theorem of Leray-Schauder to conclude the 
existence of a weak solution $w^k:=w$ with $u^k=h'(w^k)$ to \eqref{ex.semi}
with $\eta=1$.

{\em Step 2: a priori estimates.} Inequality \eqref{ex.aux2} shows,
for $w=w^k$, $u=u^k$, and $\eta=1$, that
\begin{align*}
  (1-C_h\tau)\int_{\T^d}h(u^k)dx 
	&+ \kappa\tau\int_{\T^d}\bigg(\bigg(\frac{u_1^k}{u_2^k}\bigg)^{\alpha-p}
  + \bigg(\frac{u_1^k}{u_2^k}\bigg)^{p-\alpha}\bigg)|\na u^k|^2 dx \nonumber \\
	&{}+ \tau^2\int_{\T^d}\big(|D^m w^k|^2 + |w^k|^2\big)dx
	\le \int_{\T^d}h(u^{k-1})dx.	
\end{align*}
We sum \eqref{ex.aux2} for $k=1,\ldots,j$ and divide the resulting inequality
by $1-C_h\tau$ (recall that we have chosen $\tau<1/C_h$):
\begin{align*}
  \int_{\T^d}h(u^j)dx &+ \frac{\kappa\tau}{1-C_h\tau}\sum_{k=1}^j
	\int_{\T^d}\bigg(\bigg(\frac{u_1^k}{u_2^k}\bigg)^{\alpha-p}
  + \bigg(\frac{u_1^k}{u_2^k}\bigg)^{p-\alpha}\bigg)|\na u^k|^2 dx \\
	&\phantom{xx}{}+ \frac{\tau^2}{1-C_h\tau}\sum_{k=1}^j
	\int_{\T^d}\big(|D^m w^k|^2 + |w^k|^2\big)dx \\
	&\le \frac{1}{1-C_h\tau}\int_{\T^d}h(u^0)dx
	+ \frac{C_h\tau}{1-C_h\tau}\sum_{k=1}^{j-1}\int_{\T^d}h(u^k)dx.
\end{align*}
We apply the discrete Gronwall inequality \cite{Cla87} to obtain for $j\tau\le T$,
\begin{align}
  \int_{\T^d}h(u^j)dx &+ \tau\sum_{k=1}^j
	\int_{\T^d}\bigg(\bigg(\frac{u_1^k}{u_2^k}\bigg)^{\alpha-p}
  + \bigg(\frac{u_1^k}{u_2^k}\bigg)^{p-\alpha}\bigg)|\na u^k|^2 dx \nonumber \\
	&{}+ \tau^2\sum_{k=1}^j\int_{\T^d}\big(|D^m w^k|^2 + |w^k|^2\big)dx
  \le C, \label{ex.aux3}
\end{align}
where $C>0$ denotes a constant which is independent of $\tau$ (and
independent of $T$ if $\mu_i\le 0$) 
\blue{but dependent on the initial entropy $H[u^0]$.}

We define the piecewise constant functions in time
$w^{(\tau)}(x,t)=w^k(x)$ and $u^{(\tau)}(x,t)=u^k(x)$ for $x\in\T^d$ and
$t\in((k-1)\tau,k\tau]$, $k=1,\ldots,j$. Furthermore, we introduce the
shift operator $\sigma_\tau u^{(\tau)}(x,t)=u^{k-1}(x)$ for $x\in\T^d$,
$t\in((k-1)\tau,k\tau]$. With this notation, we can rewrite 
\eqref{ex.aux1} (with $\eta=1$) as
\begin{align}
  \frac{1}{\tau}\int_0^T & \int_{\T^d}(u^{(\tau)}-\sigma_\tau u^{(\tau)})
	\cdot\phi dxdt + \int_0^T\int_{\T^d}\na\phi:(h''A)(u^{(\tau)})\na u^{(\tau)}dxdt 
	\nonumber \\
  &{}+ \tau\int_0^T\int_{\T^d}\big(D^m w^{(\tau)}\cdot D^m\phi 
	+ w^{(\tau)}\cdot\phi^{(\tau)}\big)dxdt 
	+ \sum_{i=1}^2\mu_i\int_0^T\int_{\T^d}u_i^{(\tau)}\phi_i dxdt \label{ex.weak}
\end{align}
and \eqref{ex.aux3} as
\begin{align}
  \int_{\T^d}h(u^{(\tau)}(t))dx
	&+ \int_0^t\int_{\T^d}\bigg(\bigg(\frac{u_1^{(\tau)}}{u_2^{(\tau)}}\bigg)^{\alpha-p}
  + \bigg(\frac{u_1^{(\tau)}}{u_2^{(\tau)}}\bigg)^{p-\alpha}\bigg)
	|\na u^{(\tau)}|^2 dxds \nonumber \\
	&{}+ \tau\int_0^t\int_{\T^d}\big(|D^m w^{(\tau)}|^2 + |w^{(\tau)}|^2\big)dxds
	\le C, \label{ex.ei}
\end{align}
where $t\in((j-1)\tau,j\tau]$. It follows that
\begin{equation}\label{est.w}
  \|w^{(\tau)}\|_{L^2(0,T;H^m(\T^d))} \le C\tau^{-1/2}.
\end{equation}
By Lemma \ref{lem.estal}, Lemma \ref{lem.h}, and estimate \eqref{ex.ei}, 
we find that
\begin{align}
  & \int_{\T^d}\bigg(\bigg|a\bigg(\frac{u_1^{(\tau)}}{u_2^{(\tau)}}\bigg)
	u_1^{(\tau)}\bigg|^2 + \bigg|a\bigg(\frac{u_1^{(\tau)}}{u_2^{(\tau)}}\bigg)
	u_2^{(\tau)}\bigg|^2 \bigg)dx
	\le 3C_a\int_{\T^d}h(u^{(\tau)})dx \le C, \label{ex.a1} \\
	& \int_{\T^d}\big((u_1^{(\tau)})^2 + (u_2^{(\tau)})^2\big)dx
	\le \int_{\T^d}h(u^{(\tau)})dx \le C. \label{ex.a2}
\end{align}
Moreover, using Lemma \ref{lem.estA} and \eqref{ex.ei},
\begin{align}
  \int_0^T\int_{\T^d} 
	& \Big(\big|\na \big(a(u_1^{(\tau)}/u_2^{(\tau)})u_1^{(\tau)}\big)\big|^2
	+ \big|\na\big(a(u_1^{(\tau)}/u_2^{(\tau)})u_2^{(\tau)}\big)\big|^2\Big)dxdt 
	\nonumber \\
	&= \int_{\T^d}|A(u^{(\tau)})\na u^{(\tau)}|^2 dx
	\le \int_0^T\int_{\T^d}|A(u^{(\tau)})|^2|\na u^{(\tau)}|^2 dxdt \nonumber \\
	&\le C_A\int_0^T\int_{\T^d}\bigg(1+\bigg(\frac{u_1^{(\tau)}}{u_2^{(\tau)}}\bigg)^4
	+\bigg(\frac{u_2^{(\tau)}}{u_1^{(\tau)}}\bigg)^4
	\bigg)|\na u^{(\tau)}|^2 dxdt \nonumber \\
	&\le C\int_0^T\int_{\T^d}\bigg(\bigg(\frac{u_1^{(\tau)}}{u_2^{(\tau)}}
	\bigg)^{\blue{\alpha-p}}
	+ \bigg(\frac{u_1^{(\tau)}}{u_2^{(\tau)}}\bigg)^{p-\alpha}\bigg)
	|\na u^{(\tau)}|^2dxdt \le C. \label{ex.a3}
\end{align}
The last but one inequality follows from the elementary estimate
$1+y^4\le y^{\alpha-p}+y^{p-\alpha}$ for $y>0$ which holds because of the
assumption $\alpha-p\ge 4$. Estimates \eqref{ex.a1}-\eqref{ex.a3} 
yield for $i=1,2$,
\begin{align}
  \|u_i^{(\tau)}\|_{L^\infty(0,T;L^2(\T^d))} +
	\|\na u_i^{(\tau)}\|_{L^2(0,T;L^2(\T^d))} &\le C, \label{est.uL2} \\
  \|a(u_1^{(\tau)}/u_2^{(\tau)})u_i\|_{L^\infty(0,T;L^2(\T^d))} 
	+ \|\na(a(u_1^{(\tau)}/u_2^{(\tau)})u_i^{(\tau)})\|_{L^2(0,T;L^2(\T^d))} &\le C.
	\label{est.aL2}
\end{align}
These estimates are uniform in $T>0$ if $\mu_i\le 0$.

Next, we derive a uniform estimate for the discrete time derivative
$(u^{(\tau)}-\sigma_\tau u^{(\tau)})/\tau$. For $\phi\in L^2(0,T;H^m(\T^d))$,
we estimate
\begin{align*}
  \frac{1}{\tau} \bigg|\int_0^T(u^{(\tau)}-\sigma_\tau u^{(\tau)})\cdot\phi
	dxdt\bigg|
	&\le \|A(u^{(\tau)})\na u^{(\tau)}\|_{L^2(0,T;L^2(\T^d))}
	\|\na\phi\|_{L^2(0,T;L^2(\T^d))} \\
	&\phantom{xx}{}+ \tau\|w^{(\tau)}\|_{L^2(0,T;H^m(\T^d))}
	\|\phi\|_{L^2(0,T;H^{m}(\T^d))} \\
	&\phantom{xx}{}+ \max\{\mu_1,\mu_2\}\|u^{(\tau)}\|_{L^2(0,T;L^2(\T^d))}
	\|\phi\|_{L^2(0,T;L^2(\T^d))} \\
	&\le C\|\phi\|_{L^2(0,T;H^m(\T^d))},
\end{align*}
taking into account the bounds \eqref{est.w}, \eqref{ex.a3}, and \eqref{est.uL2}. 
Therefore,
\begin{equation}\label{est.ut}
  \tau^{-1}\|u^{(\tau)}-\sigma_\tau u^{(\tau)}\|_{L^2(0,T;H^{m}(\T^d)')} \le C.
\end{equation}

{\em Step 3: limit $\tau \to 0$.}
Estimates \eqref{est.uL2} and \eqref{est.ut} allow us to apply the
Aubin-Lions lemma in the discrete version of \cite{DrJu12} to obtain
the existence of a subsequence, which is not relabeled, such that, as $\tau\to 0$,
$$
  u_i^{(\tau)} \to u_i \quad\mbox{strongly in }L^2(0,T;L^2(\T^d))
	\mbox{ and a.e.}, \ i=1,2.
$$
Moreover, by \eqref{est.w}, \eqref{est.uL2}, and \eqref{est.ut}, 
for the same subsequence and $i=1,2$,
\begin{align*}
 \blue{\tau w_i^{(\tau)}\to 0} &\quad\mbox{strongly in }L^2(0,T;H^m(\T^d)), \\
  \na u_i^{(\tau)}\rightharpoonup \na u_i &\quad\mbox{weakly in }L^2(0,T;L^2(\T^d)), \\
	\tau^{-1}(u_i^{(\tau)}-\sigma_\tau u_i^{(\tau)})\rightharpoonup \pa_t u_i
	&\quad\mbox{weakly in }L^2(0,T;H^{m}(\T^d)').
\end{align*}
The pointwise convergence of $(u_i^{(\tau)})$, Fatou's lemma, and estimate
\eqref{ex.ei} imply that, for a.e. $t\in(0,T)$,
\begin{align*}
  \sum_{i=1}^2\int_{\T^d}(u_i(t)-\log u_i(t))dx
	&\le \liminf_{\tau\to 0}\sum_{i=1}^2
	\int_{\T^d}\big(u_i^{(\tau)}(t)-\log u_i^{(\tau)}(t)\big)dx \\
	&\le \liminf_{\tau\to 0}\int_{\T^d}h(u^{(\tau)}(t))dx \le C.
\end{align*}
This means that $u_i>0$ a.e.\ in $\T^d\times(0,T)$.

Estimate \eqref{ex.a1} and \eqref{ex.a3} show that, up to a subsequence,
$$
  a(u_1^{(\tau)}/u_2^{(\tau)})u_i^{(\tau)} \rightharpoonup q_i
	\quad\mbox{weakly in }L^2(0,T;H^1(\T^d)),\ i=1,2,
$$
where $q_i\in L^2(0,T;H^1(\T^d))$. We wish to identify $q_i$. To this end,
let us define $\chi_\eps^{(\tau)}=\mathbf{1}_{\{u_1^{(\tau)}\ge\eps,\,
u_2^{(\tau)}\ge\eps\}}$ and $\chi_\eps=\mathbf{1}_{\{u_1\ge\eps,\,u_2\ge\eps\}}$,
where $\mathbf{1}_A$ denotes the characteristic function on the set $A$.
Clearly, $\chi_\eps^{(\tau)}\to\chi_\eps$ strongly in
$L^s(0,T;L^s(\T^d))$ for all $1\le s<\infty$. We infer that
$$
  \chi_\eps^{(\tau)}a(u_1^{(\tau)}/u_2^{(\tau)})u_i^{(\tau)}
	\rightharpoonup \chi_\eps a(u_1/u_2)u_i \quad\mbox{weakly in }L^s(0,T;L^s(\T^d)),
	\ 1\le s<2.
$$
We deduce that $q_i=a(u_1/u_2)u_i$ on the set $\{u_1\ge\eps,\,u_2\ge\eps\}$.
Since $\eps>0$ is arbitrary and $u_i>0$ a.e.\ in $\T^d\times(0,T)$, this
identification holds, in fact, \blue{a.e.\ }in $\T^d\times(0,T)$.

Consequently, we may perform the limit $\tau\to 0$ in \eqref{ex.weak}
to deduce that $u$ is a weak solution to \eqref{1.eq} \blue{with test functions} 
$L^2(0,T;H^{m}(\T^d)')$.
However, since $a(u_1/u_2)u_i\in L^2(0,T;H^1(\T^d))$, 
we employ a density argument to infer that \eqref{1.eq} also holds for
$L^2(0,T;H^{1}(\T^d)')$. Since $u_i\in L^2(0,T;H^1(\T^d))$ and
$\pa_t u_i\in L^2(0,T;H^{1}(\T^d)')$, it follows that $u_i\in C^0([0,T];L^2(\T^d))$,
so the initial datum is satisfied in $L^2(\T^d)$. Finally, since the bounds
are uniform in $T$ if $\mu_i\le 0$, the statement \eqref{1.unif} follows.


\section{Proof of Theorem \ref{thm.time}}\label{sec.time}

\blue{
Theorem~\ref{thm.ex} allows us to employ the test equations $u_1-\bar{u}_1$, 
$u_2-\bar{u}_2$ in \eqref{1.eq}, respectively:
$$
  \frac{d}{dt}\int_{\T^d}\sum_{i=1}^2 (u_i-\bar{u}_i)^2 dx 
	= -\int_{\T^d}\sum_{i=1}^2\na u_i\cdot\na (a(u)u_i) dx,
$$
which, together with Theorem~\ref{thm.ex}, implies that 
$(d/dt)\int\sum_{i=1}^2 (u_i-\bar{u}_i)^2 dx \in L^1(0,\infty)$.
Consequently, the limit
$$
  \lim_{t\to\infty}\int_{\T^d}\sum_{i=1}^2 ( u_i(t)-\bar{u}_i)^2 dx  
  = \int_{\T^d}\sum_{i=1}^2 (u_i^0-\bar{u}_i)^2 dx 
  + \int_0^\infty \frac{d}{dt}\int_{\T^d}\sum_{i=1}^2 ( u_i-\bar{u}_i)^2 dx\, dt 
$$
exists and is finite. Poincar\'e's inequality and Theorem \ref{thm.ex} imply that
$$ 
  \int_{\T^d}\sum_{i=1}^2 ( u_i-\bar{u}_i)^2 dx 
	\leq C_P \int_{\T^d}\sum_{i=1}^2 |\na u_i|^2 dx \in L^1(0,\infty), 
$$
which means that $\lim_{t\to\infty}\int_{\T^d}\sum_{i=1}^2 ( u_i(t)-\bar{u}_i)^2 dx 
= 0$. This finishes the proof.
}

\begin{remark}\label{rem.conv}\rm
If $\mu_i<0$ for $i=1,2$, we can prove the exponential convergence 
of the solution $u(t)$ to \eqref{1.eq} in $H^1(\T^d)'$ 
by using the dual method. Indeed,
let $\phi_i\in L^2(0,T;H^1(\T^d))$ be the unique solution to $-\Delta\phi_i=u_i(t)$
in $\T^d$ and $\int_{\T^d}\phi_idx=0$, $i=1,2$. 
Employing $\phi=(\phi_1,\phi_2)$ as a test function
in \eqref{1.eq}, we find after a straightforward computation that
$$
  \frac12\frac{d}{dt}\int_{\T^d}\big(|\na\phi_1|^2+|\na\phi_2|^2\big) dx 
	+ \int_{\T^d}a(u_1/u_2)(u_1^2+u_2^2)dx
	= \int_{\T^d}\big(\mu_1|\na\phi_1|^2+\mu_2|\na\phi_2|^2\big)dx.
$$
Then Gronwall's lemma implies that
$$
  \int_{\T^d}|\na\phi(t)|^2 dx 
	\le e^{\max\{\mu_1,\mu_2\}t}\int_{\T^d}|\na\phi(0)|^2dx, \quad t>0.
$$
Since $\|u_i\|_{H^1(\T^d)'}=\|\phi_i\|_{H^1(\T^d)}$, we conclude that
$\|u_i(t)\|_{H^1(\T^d)'}\le C\exp(-\kappa t)$ for $t>0$, where
$\kappa=-\max\{\mu_1,\mu_2\}>0$ and $C>0$ depends on $u^0$.
\qed
\end{remark}

\begin{remark}\label{rem.conv2}\rm
In the case $\mu_i>0$ for $i=1,2$, we cannot expect equilibration rates,
since the solution grows in the $L^2$ norm as $t\to\infty$. 
This growth can be made precise
if $\mu:=\mu_1=\mu_1>0$. Indeed, $u_i^*=e^{-\mu t}u_i$ solves
$$
  \pa_t u_i^* = \Delta\big(a(u_1^*/u_2^*)u_i^*\big), \quad t>0, \quad
	u^*_i(0) = u_i^0\quad\mbox{in }\T^d,\ i=1,2,
$$
and Theorem \ref{thm.time} shows that $u_i^*(t)\to\overline{u}_i$
in $L^2(\T^d)$ as $t\to\infty$, which translates to
$\|e^{-\mu t}u_i(t)-\overline{u}_i\|_{L^2(\T^d)}\to 0$.
\qed
\end{remark}


\end{document}